\documentclass[a4paper,12pt]{amsart}
\usepackage{amsmath}
\usepackage{amssymb}
\usepackage{enumerate}
\usepackage{amscd}
\usepackage{graphics}
\usepackage{latexsym}
\usepackage{verbatim} 
\usepackage{url}
\usepackage{stmaryrd}
\usepackage[foot]{amsaddr}

\setlength{\topmargin}{0.4cm}
\setlength{\oddsidemargin}{1.25cm}
\setlength{\evensidemargin}{1.25cm}
\setlength{\textwidth}{14.8cm}
\setlength{\textheight}{21cm}
\setlength{\footnotesep}{0.5cm}
\setlength{\footskip}{1.8cm}

\pagestyle{plain}
\theoremstyle{plain}
\newtheorem{thm}{{\bf Theorem}}[section]

\newtheorem{lemma}[thm]{{\bf Lemma}}
\newtheorem{fact}[thm]{{\bf Fact}}
\newtheorem{claim}[thm]{{\bf Claim}}

\theoremstyle{definition}
\newtheorem{define}[thm]{{\bf Definition}}

\newtheorem{question}[thm]{{\bf Question}}

\newcommand{\size}[1]{\left\vert {#1} \right\vert}
\newcommand{\p}{\mathcal{P}}

\newcommand{\seq}[1]{\langle {#1} \rangle}

\newcommand{\ka}{\kappa}
\newcommand{\la}{\lambda}
\newcommand{\om}{\omega}

\newcommand{\bbM}{\mathbb{M}}
\newcommand{\bbP}{\mathbb{P}}

\newcommand{\ZF}{\mathsf{ZF}}
\newcommand{\ZFC}{\mathsf{ZFC}}

\title[]{Extendible cardinals and the mantle}
\author[T. Usuba]{Toshimichi Usuba}\thanks{This research was supported by
JSPS KAKENHI grant Nos. 18K03403 and 18K03404.}
\address[T. Usuba]
{Faculty of Science and Engineering,
Waseda University, 
Okubo 3-4-1, Shinjyuku, Tokyo, 169-8555 Japan.
Tel: +81 3 5286 3000}
\email{usuba@waseda.jp}
\keywords{extendible cardinal, mantle, set-theoretic geology}
\subjclass[2010]{03E45, 03E55}


\begin{document}
\maketitle
\begin{abstract}
The mantle is the intersection of all ground models of $V$.
We show that if there exists an extendible cardinal
then the mantle is the smallest ground model of $V$.
\end{abstract}

\section{Introduction}

Let us say that an inner model $W$ of $\ZFC$ is a \emph{ground}
if the universe $V$ is a set-forcing extension of $W$.
The \emph{set-theoretic geology}, initiated by Fuchs-Hamkins-Reitz \cite{FHR},
is a study of the structure of all grounds of $V$.
An important object in the set-theoretic geology is the mantle:
\begin{define}
The mantle $\bbM$ is the intersection of all grounds of $V$.
\end{define}
It is known that the mantle is a transitive, definable, forcing-invariant model of $\ZFC$ (\cite{FHR}, Usuba \cite{Usuba}).
The mantle itself needs not to be a ground of $V$, but 
if the mantle is a ground  of $V$ then it is the smallest ground.
In \cite{FHR}, they asked the following question:
Under what circumstances is the mantle also a ground model of the universe?
For this question, Usuba answered  if
some very large cardinal exists then the mantle must be a ground:

\begin{fact}[\cite{Usuba}]
Suppose there exists a hyper-huge cardinal.
Then the mantle is a ground of $V$.
More precisely, there is a poset $\bbP \in \bbM$ and
a $(\bbM ,\bbP)$-generic $G$ such that $\size{\bbP}<\ka$ and
$V=\bbM[G]$.
\end{fact}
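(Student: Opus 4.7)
The plan is to combine the uniform definability of grounds due to Fuchs--Hamkins--Reitz with the set-indexed version of the Downward Directed Grounds (DDG) principle (proved by Usuba in $\ZFC$), and then use the hyper-huge cardinal $\ka$ to reduce the proper class of all grounds of $V$ to a set-indexed subfamily. Recall the facts to be used: there is a formula $\varphi(x,r)$ such that every ground of $V$ is of the form $W_r = \{x : \varphi(x,r)\}$ for some parameter $r$; and for every set $A$ there is a single ground $W \subseteq W_r$ for all $r \in A$.

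First I would reduce the theorem to the following \emph{set-covering property}: there is an ordinal $\la$ such that every ground $W$ of $V$ contains some $W_r$ with $r \in V_\la$. Granting this, apply set-DDG to the family $\{W_r : r \in V_\la\}$ to obtain a ground $W^*$ with $W^* \subseteq W_r$ for every $r \in V_\la$. By set-covering, $W^* \subseteq W$ for \emph{every} ground $W$, so $W^* \subseteq \bbM$; conversely $\bbM \subseteq W^*$ since $W^*$ is itself a ground. Thus $W^* = \bbM$, and in particular $\bbM$ is a ground.

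The heart of the argument is establishing the set-covering property, and this is where the hyper-huge cardinal is used. For a sufficiently large $\la$, fix an elementary embedding $j : V \to M$ with $\crit(j) = \ka$, $j(\ka) > \la$, and ${}^{j(\la)}M \subseteq M$. For an arbitrary ground $W_r$ of $V$, elementarity gives that $W_{j(r)}$ is a ground of $M$; the huge closure of $M$ ensures that all relevant forcing posets, generics and initial segments of $V$ lie in $M$. A reflection argument, using that the defining formula $\varphi$ is transported canonically by $j$, then produces a parameter $r' \in V_\la$ such that $W_{r'} \subseteq W_r$ in $V$. For the size refinement $\size{\bbP} < \ka$ with $V = \bbM[G]$, one exploits that hyper-hugeness of $\ka$ is preserved by forcing of size less than $\ka$, so the analysis can be arranged to take place beneath $\ka$: the reflected parameter $r'$ is coded below $\ka$, whence the witnessing poset $\bbP$ and the generic $G$ can be taken of cardinality strictly less than $\ka$.

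The main obstacle will be the reflection-and-closure step that yields $r' \in V_\la$ with $W_{r'} \subseteq W_r$. Making this precise requires keeping careful track of how $j$ acts on the ground-parameter $r$, choosing $\la$ large enough to code all forcing data needed for the definability of $W_r$, and leveraging ${}^{j(\la)}M \subseteq M$ to perform the reflection inside $M$ before pulling it back to $V$. Once this covering property is in hand, the remainder of the argument is a straightforward application of set-DDG together with standard preservation facts for large cardinals under small forcing.
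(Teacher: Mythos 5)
This Fact is only cited in the paper (it is the main theorem of \cite{Usuba}); there is no internal proof to compare against, so I am judging your sketch against the known argument. Your global architecture is the right one and does match that argument: use the uniform definability of grounds, reduce the class of all grounds to a set-indexed family, apply the set-indexed downward directedness to get a single ground $W^*$ below every ground, and conclude $W^*=\bbM$ exactly as you say. That concluding step is fine.

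The genuine gap is that the entire content of hyper-hugeness is hidden in the sentence ``a reflection argument \dots then produces a parameter $r' \in V_\la$ such that $W_{r'} \subseteq W_r$ in $V$.'' That is not an argument; it is precisely the main lemma of \cite{Usuba}, namely that every ground of $V$ is a $\ka$-ground, and nothing in your sketch supplies it. Concretely: (i) your covering property fixes $\la$ in advance, but the embedding must be chosen after the ground $W_r$, with $j(\la)$-closure for a $\la$ large enough to absorb the poset and generic witnessing that $W_r$ is a ground; you then need the reflected parameter to land below a bound independent of $W_r$, and your sketch offers no mechanism for this. The actual mechanism is that a $\ka$-ground is completely determined by $\p(\ka)\cap W$ via the $\ka$-covering and $\ka$-approximation properties (Fact \ref{uniqueness} together with the fact that small forcing extensions have these properties), which is what bounds the relevant grounds by $2^{2^\ka}$ and produces small parameters. (ii) Elementarity transports statements from $V$ to $M$; to get a conclusion in $V$ about $W_{j(r)}$ (a ground of $M$) one must identify models across $V$ and $M$ using the approximation/cover uniqueness plus ${}^{j(\la)}M \subseteq M$, not a ``canonical transport of $\varphi$''; as written you never show $W_{r'}\subseteq W_r$ holds in $V$ rather than in $M$. (iii) The refinement $\size{\bbP}<\ka$ does not follow from preservation of hyper-hugeness under small forcing; it follows from the same missing lemma, applied to the mantle once it is known to be a ground. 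So the proposal assumes the theorem's key content rather than proving it.
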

An uncountable cardinal $\ka$ is \emph{hyper-huge}
if for every cardinal $\la \ge \ka$,
there is an elementary embedding $j:V \to M$ for some inner model $M$
such that the critical point of $j$ is $\ka$, $\la<j(\ka)$, and
$M$ is closed under $j(\la)$-sequences.

In this paper we prove that the hyper-huge cardinal assumption can be weakened
to the extendible cardinal assumption. 
Recall that an uncountable cardinal $\ka$ is \emph{extendible}
if for every ordinal $\alpha \ge \ka$, there exists $\beta>\alpha$ and
an elementary embedding $j:V_\alpha \to V_\beta$ such that
the critical point of $j$ is $\ka$ and $\alpha<j(\ka)$.
Every hyper-huge cardinal is an extendible cardinal  limit of extendible cardinals.
\begin{thm}\label{mainthm}
Suppose there exists an extendible cardinal.
Then the mantle is a ground of $V$.
In fact if $\ka$ is extendible then the $\ka$-mantle of $V$ is its smallest ground
(The $\ka$-mantle will be defined in Definition \ref{ka-ground} below).
\end{thm}

\section{Some materials}
Let's recall some basic definitions and facts about the set-theoretic geology.
See \cite{FHR} for more information.

\begin{fact}[\cite{FHR}, Reitz \cite{Reitz}]
\label{definability}
There is a formula $\varphi(x,y)$ of set-theory such that:
\begin{enumerate}
\item For every $r$, the class $W_r=\{x \mid \varphi(x,r)\}$ is a ground of $V$ with
$r \in W_r$.
\item For every ground $W$ of $V$, there is $r$ with
$W=W_r$.
\end{enumerate}
\end{fact}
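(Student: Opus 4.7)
The plan is to prove uniform definability of grounds via the Laver--Woodin theorem on ground model definability. Recall that, for inner models $W \subseteq V$ and a cardinal $\delta$, the pair $(W,V)$ has the \emph{$\delta$-approximation property} if every $A \in V$ with $A \subseteq W$ satisfying $A \cap x \in W$ for every $x \in W$ of $W$-cardinality $<\delta$ itself lies in $W$, and the \emph{$\delta$-cover property} if every $x \in V$ with $x \subseteq W$ and $|x|^V<\delta$ is contained in some $y \in W$ with $|y|^W<\delta$. The Laver--Woodin theorem asserts that whenever $V = W[G]$ is a set-forcing extension of $W$ by a poset of $W$-size $<\delta$, the pair $(W,V)$ satisfies both properties, and moreover $W$ is the unique inner model of $\ZFC$ in $V$ with $V_\delta^W$ prescribed and with the approximation and cover properties relative to $V$.

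Given this, I would construct the desired formula $\varphi(x,r)$ as follows. Decode $r$ as a pair $(\delta,s)$, with $\delta$ a cardinal and $s$ a transitive set intended to be $V_\delta^W$ for some ground $W$; if $r$ cannot be so decoded, default $\varphi(x,r)$ to $x=x$, so that $W_r=V$ is the trivial ground. For a well-formed $r=(\delta,s)$, let $\varphi(x,r)$ assert that there exists a (necessarily unique) inner model $N$ of $V$ with $V_\delta^N=s$ such that $(N,V)$ has the $\delta$-approximation and $\delta$-cover properties, $V$ is a set-forcing extension of $N$ via a poset of $N$-size $<\delta$, and $x \in N$; if no such $N$ exists, again set $W_r=V$.

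Verification is then direct. For clause (2): given a ground $W$ of $V$ with $V=W[G]$ for some $\bbP$-generic $G$ with $|\bbP|^W<\delta$, Laver--Woodin identifies $W$ as the unique witness $N$ in the formula, so $W=W_r$ for $r=(\delta,V_\delta^W)$; moreover both $\delta$ and $V_\delta^W$ lie in $W$, so $r\in W=W_r$. For clause (1): either the unique $N$ exists and $W_r=N$ is a ground (by the forcing-extension clause built into $\varphi$) with $r\in N$, or the default branch fires and $W_r=V$ is the trivial ground with $r\in V$ automatically.

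The principal technical obstacle is expressing ``there exists a unique inner model $N$ with the listed properties'' as an honest first-order formula of $\ZFC$, since $N$ is a priori a proper class. The standard remedy, underwritten by the uniqueness half of Laver--Woodin, is to describe $N$ via set-sized local approximations: one says $x \in N$ iff for every sufficiently large ordinal $\beta$ there is a transitive set $M$ of size $|V_\beta|$ that reflects enough of $V_\beta$, contains $x$, satisfies $V_\delta^M=s$, and exhibits the $\delta$-approximation and $\delta$-cover fragments between $M$ and $V_\beta$. The Laver--Woodin uniqueness then guarantees that these local fragments cohere into a single global class, producing the required first-order formula $\varphi(x,r)$.
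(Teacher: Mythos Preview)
The paper does not prove this statement; it is recorded as a cited fact attributed to Fuchs--Hamkins--Reitz and Reitz, so there is no in-paper argument to compare against. Your sketch is essentially the standard proof from those references: parametrize a ground by a cardinal $\delta$ together with a fragment of its local data, invoke the Hamkins/Laver uniqueness theorem (the paper's Fact~\ref{uniqueness}) to see that this data pins the ground down, express the resulting class first-order by quantifying over set-sized approximants, and default to $V$ for ill-formed parameters so that clause~(1) holds for every $r$. One minor point worth tightening: the uniqueness theorem as stated in the paper is keyed to $\mathcal{P}(\delta)\cap W$ together with $(\delta^+)^W=\delta^+$, not to $V_\delta^W$; your choice of $V_\delta^W$ still suffices (for regular $\delta$ the $\delta$-approximation property lets one recover $\mathcal{P}(\delta)\cap W$ from $V_\delta^W$, and $(\delta^+)^W=\delta^+$ is automatic for grounds via posets of size $<\delta$), but you should make that recovery explicit rather than absorbing it into your statement of Laver--Woodin.
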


It turned out that all grounds are downward set-directed:

\begin{fact}[\cite{Usuba}]\label{DDG}
Let $\{W_r \mid r \in V\}$ be the collection of all grounds of $V$ defined as in Fact 
\ref{definability}.
For every set $X$, there is $r$ such that
$W_r \subseteq W_s$ for every $s \in X$.
\end{fact}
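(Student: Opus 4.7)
The plan is to use Fact \ref{DDG} to produce a candidate for the smallest ground and then to use the extendibility of $\ka$ to show that this candidate is indeed smallest. I anticipate that the $\ka$-mantle is $\bbM_\ka = \bigcap\{W_r : r \in V_\ka\}$.

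Applying Fact \ref{DDG} to the set $V_\ka$ gives some $r^{*} \in V$ with $W_{r^{*}} \subseteq W_s$ for every $s \in V_\ka$, so $W_{r^{*}} \subseteq \bbM_\ka$. The first key step is to show that such a witness can be taken inside $V_\ka$: if one produces $r^{**} \in V_\ka$ with $W_{r^{**}} \subseteq W_s$ for every $s \in V_\ka$, then $\bbM_\ka \subseteq W_{r^{**}}$ is automatic and $\bbM_\ka = W_{r^{**}}$ is a ground. To achieve this, I would fix an ordinal $\alpha > \ka$ with $r^{*} \in V_\alpha$ such that $V_\alpha \prec_{\Sigma_n} V$ for $n$ large enough to make the formula $\varphi$ of Fact \ref{definability} and the ground-containment relation absolute (extendibility guarantees unboundedly many such $\alpha$). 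By extendibility pick $j : V_\alpha \to V_\beta$ elementary with $\crit(j) = \ka$ and $j(\ka) > \alpha$. Since $j \upharpoonright V_\ka$ is the identity and $j(V_\ka) = V_{j(\ka)}$, one checks that $\range(j) \cap V_{j(\ka)} = V_\ka$. The argument then proceeds: internally in $V_\beta$, apply the $V_\beta$-version of DDG to $V_{j(\ka)}$ to obtain a witness; exploit the structure of ground containment together with elementarity to arrange that this witness can be taken inside $V_{j(\ka)} \cap \range(j) = V_\ka$; pull back via $j^{-1}$ and invoke absoluteness to obtain the desired $r^{**} \in V_\ka$ in $V$.

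The second step---that $W_{r^{**}}$ sits below every ground---is more direct. Given any $r \in V$, take $\alpha > \ka$ with $r \in V_\alpha$ and use extendibility to obtain $j : V_\alpha \to V_\beta$ with $\crit(j) = \ka$ and $j(\ka) > \alpha$. Because $r^{**} \in V_\ka$ we have $j(r^{**}) = r^{**}$; elementarity applied to the statement ``$W_{r^{**}} \subseteq W_s$ for all $s \in V_\ka$'' in $V_\alpha$ then yields ``$W_{r^{**}} \subseteq W_s$ for all $s \in V_{j(\ka)}$'' in $V_\beta$, and since $r \in V_\alpha \subseteq V_{j(\ka)}$, this gives $W_{r^{**}} \subseteq W_r$ in $V_\beta$ and hence in $V$ by absoluteness. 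Therefore $\bbM_\ka \subseteq W_r$ for every $r \in V$, so $\bbM_\ka = \bbM$ is the smallest ground.

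The main obstacle is the first step---locating the DDG witness inside $V_\ka$. Concretely, one must argue that the $V_\beta$-internal DDG witness for $V_{j(\ka)}$ can be chosen in $V_{j(\ka)} \cap \range(j) = V_\ka$, rather than living somewhere above $j(\ka)$ in $V_\beta$. This is not implied by Fact \ref{DDG} alone and will presumably require an additional internal or iterative argument tailored to the interaction between ground containment and the extendibility embedding, possibly leveraging Bukovsky-style characterizations of grounds to cap the rank of a usable parameter. Verifying the requisite $\Sigma_n$-absoluteness of $\varphi$ and of ``$W_r \subseteq W_s$'' at the level $V_\alpha$ is also technical but more routine.
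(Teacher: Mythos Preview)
Your proposal does not address the stated result. Fact~\ref{DDG} is the downward-directedness of grounds: for any set $X$ of parameters there is a single ground $W_r$ contained in every $W_s$ with $s\in X$. The paper does not prove this; it is quoted from \cite{Usuba} and used as a black box throughout. Your write-up instead \emph{assumes} Fact~\ref{DDG} and sketches an argument for Theorem~\ref{mainthm}, the statement that under an extendible cardinal the mantle is a ground. So as a proof of the statement actually in question, it is simply off-target.

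Even read as an attempt at Theorem~\ref{mainthm}, the outline is both incomplete and structurally different from the paper's proof. First, your $\bbM_\ka=\bigcap\{W_r:r\in V_\ka\}$ is not the paper's $\ka$-mantle (Definition~\ref{ka-ground}), which is the intersection of all grounds reached by posets of size $<\ka$; whether these coincide depends on the specific parametrization in Fact~\ref{definability}, which you do not analyze. Second, you yourself identify the real gap: nothing in your argument forces the internal DDG witness for $V_{j(\ka)}$ to lie in $\range(j)\cap V_{j(\ka)}=V_\ka$, and elementarity alone does not supply this. The paper avoids this obstacle entirely. It takes any ground $W\subseteq\overline{W}$ via Lemma~\ref{2.6}, assumes $W\subsetneq\overline{W}$, and derives a contradiction by a rank-by-rank induction showing $\overline{W}_\alpha\subseteq W_\alpha$ for all $\alpha<\la$. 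The key technical device is Solovay's stationary-partition coding, which places $j``\la$ inside $W_{j(\theta)}$ and thereby lets one transport sets from $j(\overline{W}^{V_\theta})$ back into $W$. None of this machinery appears in your sketch.
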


A key of the definability of grounds as in Fact \ref{definability} is the covering and the approximation properties introduced by Hamkins \cite{Hamkins}:
\begin{define}[\cite{Hamkins}]
Let $M \subseteq V$ be a transitive model of $\ZFC$ containing all ordinals.
Let $\ka$ be a cardinal.
\begin{enumerate}
\item $M$ satisfies the \emph{$\ka$-covering property} for $V$ if
for every set $x$ of ordinals, if $\size{x}<\ka$ then there is $y \in M$ with
$x \subseteq y$ and $\size{y}<\ka$.
\item $M$ satisfies the \emph{$\ka$-approximation property} for $V$ if
for every set $A$ of ordinals, if $A \cap x \in M$ for every set $x \in M$ with size $<\ka$,
then $A \in M$.
\end{enumerate}
\end{define}

\begin{fact}[Hamkins, see Laver \cite{Laver}]\label{uniqueness}
Let $\ka$ be a regular uncountable cardinal.
Let $M, N$ be transitive models of $\ZFC$ containing all ordinals.
If $M$ and $N$ satisfy the $\ka$-covering and the $\ka$-approximation properties for $V$,
$\p(\ka) \cap M=\p(\ka) \cap N$, and $\ka^+=(\ka^+)^M=(\ka^+)^N$,
then $M=N$.
\end{fact}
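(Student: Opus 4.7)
The plan is to prove $M=N$ by showing, by transfinite induction on $\lambda$, that $\mathcal{P}(\lambda)\cap M=\mathcal{P}(\lambda)\cap N$ for every ordinal $\lambda$, and then to deduce $M=N$ from this pointwise agreement. For the deduction, given any $x\in M$, pick inside $M$ a bijection $f:\lambda\to\mathrm{tc}(\{x\})$ for some ordinal $\lambda$, and encode the membership structure via $E_x=\{\Gamma(\alpha,\beta):f(\alpha)\in f(\beta)\}\subseteq\lambda$, where $\Gamma$ is the G\"odel pairing function. Since $E_x\in\mathcal{P}(\lambda)\cap M=\mathcal{P}(\lambda)\cap N$, one has $E_x\in N$; the Mostowski collapse of the well-founded extensional relation coded by $E_x$ is absolute for transitive $\ZFC$-models and returns $x$ when carried out in $N$, so $x\in N$. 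A symmetric argument yields $M=N$.

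The induction on $\lambda$ has base case $\lambda\le\kappa$ given by hypothesis, and the successor step is immediate. For the substantive limit step, fix $A\in\mathcal{P}(\lambda)\cap M$; by symmetry it suffices to place $A$ in $N$. I would invoke the $\kappa$-approximation property of $N$: it is enough to show $A\cap y\in N$ for each $y\in N$ with $|y|^V<\kappa$. Fix such a $y$, without loss of generality $y\subseteq\lambda$, and apply the $\kappa$-covering of $M$ to produce $z\in M$ with $y\subseteq z\subseteq\lambda$ and $|z|^V<\kappa$. Then $A\cap z\in M$ and $A\cap y=(A\cap z)\cap y$, so the task reduces to showing $A\cap z\in N$. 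When $z$ is bounded in $\lambda$, say $z\subseteq\beta<\lambda$, the inductive hypothesis yields $A\cap z\in\mathcal{P}(\beta)\cap M=\mathcal{P}(\beta)\cap N$, and we are done.

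The main obstacle is the case $\mathrm{cf}(\lambda)<\kappa$ with $z$ cofinal in $\lambda$, where the inductive hypothesis does not apply directly to $A\cap z$. I would handle this by using $\kappa$-covering in $N$ to choose a cofinal sequence $\langle\lambda_i:i<\theta\rangle\in N$ with $\theta<\kappa$, writing $A\cap z=\bigcup_{i<\theta}A\cap z\cap\lambda_i$ with each piece $A\cap z\cap\lambda_i\in\mathcal{P}(\lambda_i)\cap M=\mathcal{P}(\lambda_i)\cap N$ by the inductive hypothesis, and reassembling by a second pass through the $\kappa$-approximation property applied to $A\cap z$ viewed as a small set of ordinals, whose intersection with any $w\in N$ of size $<\kappa$ reduces to already-placed bounded pieces. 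The hypothesis $\kappa^+=(\kappa^+)^M=(\kappa^+)^N$ enters decisively at the level $\lambda=\kappa^+$: it ensures that every ordinal below $\kappa^+$ has cardinality $\le\kappa$ compatibly in $M$, $N$, and $V$, so that bounded-in-$\kappa^+$ subsets occurring in $M$ can be coded by subsets of $\kappa$ in a manner consistent with $N$, thereby letting the base-level agreement $\mathcal{P}(\kappa)\cap M=\mathcal{P}(\kappa)\cap N$ propagate upward through the induction.
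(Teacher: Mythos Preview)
The paper does not prove this statement at all: Fact~\ref{uniqueness} is quoted as a known result of Hamkins, with a reference to Laver~\cite{Laver}, and no argument is given. So there is nothing in the paper to compare your proof against; I can only assess your sketch on its own merits.

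Your overall strategy---prove $\mathcal{P}(\lambda)\cap M=\mathcal{P}(\lambda)\cap N$ by induction on $\lambda$, then recover $M=N$ via Mostowski collapse of coded $\in$-structures---is the standard one and is correct in outline. The base case, the successor step, and the deduction of $M=N$ from agreement on power sets of ordinals are all fine. The limit case when $\cf(\lambda)\ge\kappa$ is also fine, since then every $z\subseteq\lambda$ of size $<\kappa$ is bounded and the inductive hypothesis applies directly.

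The part that is not yet a proof is your treatment of the case $\cf(\lambda)<\kappa$ with $z$ cofinal. You propose to show $A\cap z\in N$ by a ``second pass through the $\kappa$-approximation property,'' but this is circular as written: to verify the approximation hypothesis for $A\cap z$ you must check $(A\cap z)\cap w\in N$ for every $w\in N$ of size $<\kappa$, and such a $w$ may itself be cofinal in $\lambda$, putting you back where you started. Knowing that each bounded piece $A\cap z\cap\lambda_i$ lies in $N$ does not by itself give you the union, since the \emph{sequence} $\langle A\cap z\cap\lambda_i:i<\theta\rangle$ is not yet known to lie in $N$. You also describe the role of $\kappa^+=(\kappa^+)^M=(\kappa^+)^N$ only informally; in the actual argument this hypothesis is used to produce bijections $\kappa\to\alpha$ (for $\alpha<\kappa^+$) that lie in \emph{both} $M$ and $N$, which is what lets the coding by subsets of $\kappa$ transfer. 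Filling this in---typically by first establishing that $M$ and $N$ share the same $<\kappa$-sized sets of ordinals, using covering on both sides together with the common $\mathcal{P}(\kappa)$---is the missing technical ingredient.
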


\begin{fact}[\cite{Hamkins}]
\label{cov. and approx. of forcing ext.} 
Let $\ka$ be a regular uncountable cardinal, and $\bbP$ a poset of size $<\ka$.
Let $G$ be $(V, \bbP)$-generic.
Then, in $V[G]$, $V$ satisfies the $\ka$-covering and the $\ka$-approximation properties for $V[G]$.
\end{fact}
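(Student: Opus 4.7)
The fact asserts two properties and I would handle them separately: the $\ka$-covering by a direct size estimate, and the $\ka$-approximation, which is the substantive part.

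For the $\ka$-covering property, let $x\in V[G]$ be a set of ordinals with $|x|=\theta<\ka$. In $V[G]$ pick a surjection $f\colon\theta\to x$ and a $\bbP$-name $\dot f$ for it, and define in $V$
\[
y=\bigl\{\beta\in\mathrm{Ord}:\exists p\in\bbP,\ \exists\gamma<\theta,\ p\Vdash \dot f(\check\gamma)=\check\beta\bigr\}.
\]
Then $y\in V$, $x\subseteq y$, and $|y|\le|\bbP|\cdot\theta<\ka$ by the regularity of $\ka$, so $y$ witnesses covering.

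For the $\ka$-approximation property, let $A\in V[G]$ be a set of ordinals with $A\cap z\in V$ for every $z\in V$ of size $<\ka$; I aim to show $A\in V$. My plan is to prove that the set
\[
D=\{p\in\bbP:\exists B\in V,\ p\Vdash\dot A=\check B\},
\]
which lies in $V$, is dense in $\bbP$. Once it is, $G\cap D\ne\emptyset$ furnishes a $B\in V$ with $A=B$, as required.

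Assume for contradiction $D$ is not dense, and fix $p_0\in\bbP$ with no extension in $D$. Then no $p\le p_0$ decides $\dot A$ entirely, so for each such $p$ there are incompatible extensions $q,q'\le p$ and an ordinal $\alpha$ with $q\Vdash\alpha\in\dot A$ and $q'\Vdash\alpha\notin\dot A$. Iterating in $V$, recursively build a binary tree $\langle p_s:s\in 2^{<\om}\rangle$ of extensions of $p_0$ with witnessing ordinals $\langle\alpha_s:s\in 2^{<\om}\rangle$, so that at each $s$ the two children of $p_s$ are incompatible below $p_s$ and disagree on $\alpha_s$. Setting $z=\{\alpha_s:s\in 2^{<\om}\}\in V$, one has $|z|\le\aleph_0<\ka$ and hence $A\cap z\in V$ by the approximation hypothesis.

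The main obstacle is then to extract the contradiction. The intent is that $A\cap z$, decoded against the $V$-tree, identifies at each level which child of $p_s$ agrees with the generic, giving a branch $f\in 2^\om$ forced into $V$; tracing $f$ back pins down a descending sequence of conditions below $p_0$ sitting in $G$, yet determined entirely from $V$-data. To close the contradiction one must leverage $|\bbP|<\ka$, which via $\ka$-cc bounds antichains in $\bbP$ and which bounds strictly descending chains in $\bbP$ by length $\le|\bbP|$. I expect the correct calibration is to iterate the tree transfinitely up to a level below $|\bbP|^+<\ka$, handling limit stages by enlarging $z$ and reapplying the approximation hypothesis (because $\bbP$ is not assumed closed under descending sequences), until the construction is forced to terminate at some $p^*\le p_0$ that lies in $D$, contradicting the choice of $p_0$.
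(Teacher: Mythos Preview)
The paper does not prove this fact---it is cited from \cite{Hamkins}---so there is no proof to compare against; I assess your argument on its own. Your covering argument is correct. The approximation argument, however, has a real gap. You try to show that $D$ is dense in all of $\bbP$, but this is the wrong target: a condition incompatible with $G$ may well force $\dot A$ to be a fresh set, so $D$ need not be dense even when $A\in V$. The correct setup is to first fix $p_0\in G$ forcing both ``$\dot A\notin\check V$'' and ``every ${<}\ka$-piece of $\dot A$ lies in $\check V$'', and then work below $p_0$. Even after that fix, the tree does not yield a contradiction: knowing $A\cap z\in V$ lets you define a branch $f\in 2^\om$ in $V$, but $G$ has no reason to follow any branch of the tree (the two children of $p_s$ are mutually incompatible, not maximal below $p_s$), so nothing forces the conditions along the $f$-branch into $G$. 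Your proposed transfinite iteration cannot pass limit stages for exactly the reason you note---$\bbP$ has no closure---so the construction simply halts without producing a condition in $D$.

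The missing idea is to use $\size{\bbP}<\ka$ as a direct bound on the set of undecided ordinals, not as a chain or antichain bound. Below the corrected $p_0$, every $q\le p_0$ forces $\dot A\notin\check V$ and therefore leaves some ordinal $\alpha_q$ undecided. Set $z=\{\alpha_q:q\le p_0\}$; then $\size{z}\le\size{\bbP}<\ka$, so $p_0\Vdash\dot A\cap\check z\in\check V$, whence densely below $p_0$ there is $r$ with $r\Vdash\dot A\cap\check z=\check B$ for some fixed $B\in V$. But $\alpha_r\in z$, so such an $r$ decides whether $\alpha_r\in\dot A$, contradicting the choice of $\alpha_r$. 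This single use of $\size{\bbP}<\ka$ replaces the tree construction entirely.
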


Let us make some definition and observations.

\begin{define}\label{ka-ground}
Let $\ka$ be a cardinal.
A ground $W$ of $V$ is a \emph{$\ka$-ground} if
 there is a poset $\bbP \in W$ of size $<\ka$
and a $(W,\bbP)$-generic $G$ such that
$V=W[G]$.
The \emph{$\ka$-mantle} is the intersection of all $\ka$-grounds.
\end{define}
The $\ka$-mantle is a definable, transitive, and extensional class.
It is trivially consistent that the $\ka$-mantle is a model of $\ZFC$,
and we can prove that if $\ka$ is strong limit, then the $\ka$-mantle must be  a model of $\ZF$.
A sketch of the proof is as follows.
First we show that all $\ka$-grounds are downward-directed.
For any two $\ka$-grounds $W_0$ and $W_1$,
since $\ka$ is a limit cardinal,
there is a regular cardinal $\la<\ka$ such that
$W_0$ and $W_1$ are $\la$-grounds.
Then $V$ is a $\la$-c.c. forcing extension of both $W_0$ and $W_1$.
By the proof of Fact \ref{DDG} (see \cite{Usuba}),
we can find a ground $W \subseteq W_0 \cap W_1$ of $V$ such that
$V$ is a $\la^{++}$-c.c. forcing extension of $W$.
Then, we can find a poset $\bbP \in W$ of size $\le 2^{2^{{\la}^{++}}}$
and a $(W, \bbP)$-generic filter $G$ with $V=W[G]$
(e.g., see Appendix in Sargsyan-Schindler \cite{SS}), so $W$ is a $\ka$-ground of $V$ as well.
The downward-directedness of the $\ka$-grounds implies that
the $\ka$-mantle is absolute between all $\ka$-grounds,
so the $\ka$-mantle of $V$ is definable in all $\ka$-grounds.
Now, by Lemma 21 in \cite{FHR}, the $\ka$-mantle is a model of $\ZF$.

However we do not know whether the $\ka$-mantle is always a model of $\ZFC$.
\begin{question}
For a given cardinal $\ka$,
is the $\ka$-mantle always a model of $\ZFC$?
\end{question}

By Facts \ref{uniqueness} and \ref{cov. and approx. of forcing ext.},
if $W$ is a $\ka$-ground of $V$,
then $W$ is completely determined by the set $P=\p(\ka) \cap W$,
that is, $W$ is a  unique ground $W'$ with
$\p(\ka) \cap W'=P$, $\ka^+=(\ka^+)^{W'}$, and
$W'$ satisfies the $\ka$-covering and the $\ka$-approximation properties.
This means that there are at most $2^{2^\ka}$ many $\ka$-grounds of $V$,
hence there is a set $X$ of size $\le 2^{2^\ka}$
such that 
the collection $\{W_r \mid r \in X\}$ is the $\ka$-grounds.
We have the following by the combination of this observation and Fact \ref{DDG}:
\begin{lemma}\label{2.6}
Let $\ka$ be a cardinal and $\overline{W}$  the $\ka$-mantle of $V$.
Then there is a ground $W$ such that
$W \subseteq \overline{W}$.
\end{lemma}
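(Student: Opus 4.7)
The plan is to combine the cardinality bound on $\ka$-grounds (coming from Facts \ref{uniqueness} and \ref{cov. and approx. of forcing ext.}) with the downward set-directedness of all grounds (Fact \ref{DDG}). The key point is that although the collection of all $\ka$-grounds is a priori a proper class indexed by parameters $r \in V$, it can in fact be re-indexed by a \emph{set}, and once we have a set-sized index, we can invoke Fact \ref{DDG} directly.

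More precisely, first I would observe, as in the paragraph preceding the lemma, that any $\ka$-ground $W$ is determined by the triple consisting of $P := \p(\ka) \cap W$, the value of $(\ka^+)^W$, and the fact that $W$ satisfies the $\ka$-covering and $\ka$-approximation properties for $V$. Since there are at most $2^{2^\ka}$ many candidates for $P$, there is a set $Y$ of parameters of size $\le 2^{2^\ka}$ such that every $\ka$-ground equals $W_r$ for some $r \in Y$, where $W_r$ is as in Fact \ref{definability}. (To make $Y$ genuinely set-sized, for each subset $P \subseteq \p(\ka)$ that does arise from some $\ka$-ground, one just picks a single witnessing $r$; this uses only set-many choices.)

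Next, I would apply Fact \ref{DDG} to the set $Y$: there is $r^* \in V$ such that $W_{r^*} \subseteq W_s$ for every $s \in Y$. Since every $\ka$-ground is of the form $W_s$ for some $s \in Y$, this means $W_{r^*}$ is contained in every $\ka$-ground of $V$. Therefore $W := W_{r^*}$ is a ground of $V$ with $W \subseteq \bigcap\{W_s : s \in Y\} = \overline{W}$, as required.

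I do not anticipate a genuine obstacle here: the whole point is that the machinery of Facts \ref{definability}, \ref{DDG}, \ref{uniqueness}, and \ref{cov. and approx. of forcing ext.} already does the work, and the argument is essentially bookkeeping. The only subtlety worth double-checking is that the passage from ``each $\ka$-ground is determined by some subset of $\p(\ka)$'' to ``the $\ka$-grounds form a set-indexed family'' goes through in $\ZFC$, but this is a routine application of the axiom of choice (or replacement), since each determined ground has at least one parameter $r$ witnessing $W = W_r$ via Fact \ref{definability}.
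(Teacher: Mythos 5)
Your proposal is correct and follows essentially the same route as the paper: the paragraph preceding Lemma \ref{2.6} makes exactly your observation that Facts \ref{uniqueness} and \ref{cov. and approx. of forcing ext.} pin down each $\ka$-ground by $\p(\ka)\cap W$, so the $\ka$-grounds are $\{W_r \mid r \in X\}$ for a set $X$ of size $\le 2^{2^\ka}$, and the lemma is then immediate from Fact \ref{DDG} applied to $X$. Your extra remark about choosing one parameter per ground is exactly the bookkeeping the paper leaves implicit, so there is nothing to change.
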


For a class $C \subseteq V$ and an ordinal $\alpha$,
let $C_\alpha=C \cap V_\alpha $, the set of all elements of $C$ with rank $<\alpha$.

\begin{lemma}\label{2.7}
Let $\ka$ be a cardinal and $\overline{W}$ the $\ka$-mantle of $V$.
For an inaccessible $\theta>\ka$,
let $\overline{W}^{V_\theta}$ be the $\ka$-mantle of $V_\theta$,
that is, $\overline{W}^{V_\theta}$ is the intersection of all $\ka$-grounds of $V_\theta$.
\begin{enumerate}
\item If $\theta$ is inaccessible $>\ka$ and $W \subseteq V$ is a $\ka$-ground of $V$,
then $W_\theta$ is a $\ka$-ground of $V_\theta$.
\item $\overline{W}^{V_\theta} \subseteq \overline{W}_\theta$
for every inaccessible $\theta>\ka$.
\item Suppose there are proper class many inaccessible cardinals.
Then there is $\alpha>\ka$ such that for every inaccessible cardinal $\theta>\alpha$,
we have $\overline{W}^{V_\theta}=\overline{W}_\theta$.
\end{enumerate}
\end{lemma}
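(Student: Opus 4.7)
For part (1), I write $V = W[G]$ with $\bbP \in W$, $G$ a $(W,\bbP)$-generic filter, and $\size{\bbP} < \ka < \theta$; since small forcing preserves inaccessibility, $\theta$ is inaccessible in $W$, so $W_\theta = V_\theta^W$ is a transitive model of $\ZFC$, $G$ is still $(W_\theta,\bbP)$-generic, and $V_\theta = W_\theta[G]$, exhibiting $W_\theta$ as a $\ka$-ground of $V_\theta$. Part (2) then follows by intersection: for each $\ka$-ground $W$ of $V$, part (1) gives that $W_\theta$ is a $\ka$-ground of $V_\theta$, hence $\overline{W}^{V_\theta} \subseteq W_\theta$, and intersecting over all such $W$,
\[
\overline{W}^{V_\theta} \subseteq \bigcap_{W} W_\theta = \Bigl(\bigcap_{W} W\Bigr) \cap V_\theta = \overline{W}_\theta.
\]

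For part (3), the task is the reverse inclusion $\overline{W}_\theta \subseteq \overline{W}^{V_\theta}$ for every inaccessible $\theta$ above some $\alpha$. By the paragraph preceding the lemma, there is a set $X$ with $\size{X} \le 2^{2^\ka}$ such that $\{W_r : r \in X\}$ enumerates all $\ka$-grounds of $V$. I would choose $\alpha > \ka$ with $X \in V_\alpha$ and large enough that for every inaccessible $\theta > \alpha$ the Hamkins--Reitz defining formula $\varphi(x,y)$ of Fact \ref{definability} is absolute enough between $V$ and $V_\theta$ in the following sense: (a) for each $r \in X$, the class in $V_\theta$ defined by $\varphi(\cdot,r)$ is exactly $W_r \cap V_\theta$, which is a $\ka$-ground of $V_\theta$; and (b) every $\ka$-ground of $V_\theta$ has the form $W_r \cap V_\theta$ for some $r \in X$. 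Granting (a) and (b), the $\ka$-grounds of $V_\theta$ are exactly $\{W_r \cap V_\theta : r \in X\}$, so
\[
\overline{W}^{V_\theta} = \bigcap_{r \in X}(W_r \cap V_\theta) = \overline{W} \cap V_\theta = \overline{W}_\theta.
\]

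The main obstacle is establishing (b) uniformly for all inaccessible $\theta > \alpha$, i.e., ruling out \emph{exotic} $\ka$-grounds of $V_\theta$ that do not come from $\ka$-grounds of $V$. My plan is to apply Fact \ref{uniqueness} inside $V_\theta$: any $\ka$-ground $N$ of $V_\theta$ is uniquely determined by $P = \p(\ka) \cap N$ together with the $\ka$-covering and $\ka$-approximation properties and the value of $\ka^+$. Since $\size{X} \le 2^{2^\ka}$ is a set and the $\ka$-grounds of $V$ are thereby parametrized below rank $\alpha$, reflecting the formula $\varphi$ and its uniqueness clause down to $V_\theta$ for inaccessible $\theta > \alpha$ (with $\alpha$ chosen above the ranks of all witnesses needed for the uniqueness of each $W_r$) should match the parameter $P$ to one of the $r \in X$, forcing $N = W_r \cap V_\theta$ and completing the argument.
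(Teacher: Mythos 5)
Your parts (1) and (2) are fine and match the paper's (implicit) argument. The problem is part (3): your step (b) --- that \emph{every} $\ka$-ground of $V_\theta$ has the form $W_r \cap V_\theta$ for some $r \in X$ --- is exactly the hard point, and the reflection plan you sketch does not establish it. To transfer a local ground of $V_\theta$, defined inside $V_\theta$ by $\varphi(\cdot,r')$ from an \emph{arbitrary} parameter $r' \in V_\theta$, to a $\ka$-ground of $V$, you would need something like $V_\theta \prec_{\Sigma_n} V$ for $n$ large enough to capture $\varphi$ and the statement ``is a $\ka$-ground''; the class of such $\theta$ is closed unbounded, but it is not a tail, so no single $\alpha$ makes your (b) hold for \emph{all} inaccessible $\theta>\alpha$, which is what the lemma asserts. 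L\'evy reflection and the uniqueness clause of Fact \ref{uniqueness} only tell you that a local ground $N$ is determined by $\p(\ka)\cap N$; they give no reason why this trace must equal $\p(\ka)\cap W_r$ for some $r\in X$, i.e.\ they do not rule out ``exotic'' local grounds at unboundedly many inaccessible $\theta$. Indeed the lemma itself does not require ruling them out: it only needs that exotic local grounds cannot shrink the local $\ka$-mantle below $\overline{W}_\theta$ on a tail, which is strictly weaker than your (b).

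The paper proves (3) by contradiction with a pigeonhole-plus-coherence argument that avoids your (b) entirely: if $\overline{W}^{V_\theta} \subsetneq \overline{W}_\theta$ on a proper class of inaccessibles, pick for each such $\theta$ a $\ka$-ground $M^\theta$ of $V_\theta$ omitting some element of $\overline{W}_\theta$, with witnessing poset $\bbP^\theta \in V_\ka$ and generic $G^\theta$; since there are only set-many possible values of $(\bbP^\theta, G^\theta, \p(\ka)\cap M^\theta)$, these are constant on a proper class $C'$. Fact \ref{cov. and approx. of forcing ext.} gives the $\ka$-covering and $\ka$-approximation properties, so Fact \ref{uniqueness} (applied with $V_{\theta_0}$ in place of $V$) forces coherence $(M^{\theta_1})_{\theta_0}=M^{\theta_0}$ for $\theta_0<\theta_1$ in $C'$; the union $M=\bigcup_{\theta\in C'}M^\theta$ is then a $\ZFC$ model with $M[G]=V$, hence a $\ka$-ground of $V$, so $\overline{W}\subseteq M$ and $\overline{W}_\theta \subseteq M^\theta$, contradicting the choice of $M^\theta$. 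If you want to salvage your route, note that your argument (done carefully with $V_\theta \prec_{\Sigma_n} V$) would only yield $\overline{W}^{V_\theta}=\overline{W}_\theta$ on a club of $\theta$, a weaker statement than the lemma, so as it stands the proposal has a genuine gap.
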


\begin{proof}
(1) is easy, and (2) easily follows from (1).


(3).
Suppose not.
Then, by (2), 
the family ${C}=\{\theta>\ka \mid$ $\theta$ is inaccessible, $\overline{W}^{V_\theta} \subsetneq \overline{W}_\theta \}$
forms a proper class.
For $\theta \in {C}$,
there is a $\ka$-ground $M^\theta$ of $V_\theta$
with $\overline{W}_\theta \nsubseteq M^\theta $.
Fix a poset $\bbP^\theta \in (M^{\theta})_\ka$ and
an $(M^\theta, \bbP^\theta)$-generic $G^\theta$
such that $V_\theta=M^\theta[G^\theta]$.
By Fact \ref{cov. and approx. of forcing ext.}, 
$M^\theta$ has the $\ka$-covering and the $\ka$-approximation
properties for $V_\theta$, and $\ka^+=(\ka^+)^{V_\theta}=(\ka^+)^{M^\theta}$.
Since ${C}$ is a proper class,
there are a poset $\bbP \in V_\ka$,
a filter $G \subseteq \bbP$, and $P \subseteq \p(\ka)$
such that the family 
${C}'=\{\theta \in {C} \mid 
M^{\theta} \cap \p(\ka)=P$, $\bbP^\theta=\bbP$, $G^\theta=G\}$
forms a proper class.
Take $\theta_0$, $\theta_1$ from ${C}'$ with $\theta_0<\theta_1$.
Then $(M^{\theta_1})_{\theta_0}$ is a model of $\ZFC$, and 
$(M^{\theta_1})_{\theta_0} \subseteq V_{\theta_0}$ has 
the $\ka$-covering and the $\ka$-approximation properties for $V_{\theta_0}$.
By applying Fact \ref{uniqueness}, we have 
$(M^{\theta_1})_{\theta_0}=M^{\theta_0}$.
Hence the sequence $\seq{M^{\theta} \mid \theta \in {C}'}$ is coherent,
that is,
$(M^{\theta_1})_{\theta_0}=M^{\theta_0}$ for every $\theta_0<\theta_1$ from ${C}'$.
Then $M=\bigcup_{\theta \in {C}'} M^\theta$ is transitive, closed under
the G\"odel's operations, and almost universal, hence $M$ is a model of $\ZFC$
(see e.g. Theorem 13.9 in Jech \cite{Jech}). 
Moreover $M[G]=V$ because $M^{\theta}[G]=V_\theta$ for every $\theta \in {C}'$,
so $M$ is a $\ka$-ground of $V$.
Therefore we have $\overline{W} \subseteq M$, and 
$\overline{W}_\theta \subseteq M_\theta=M^{\theta}$ for every
$\theta \in {C}'$,
this is a contradiction.
\end{proof}

\section{The proof}
We start the proof of Theorem \ref{mainthm}.

\begin{proof}
Let $\overline{W}$ be the $\ka$-mantle of $V$.
We prove that $\overline{W}$ is the mantle of $V$, this provides Theorem \ref{mainthm};
By Lemma \ref{2.6},
there is a ground $W$ with $W \subseteq \overline{W}$.
Clearly $\bbM \subseteq W \subseteq \overline{W}=\bbM$, hence
$\bbM=\overline{W}=W$ is a ground of $V$.

If not, by Lemma \ref{2.6}, there is a ground $W$ of $V$ with $W \subsetneq \overline{W}$.
Fix a large inaccessible cardinal $\la>\ka$ such that
$W$ is a $\la$-ground of $V$ and $W_\la \subsetneq \overline{W}_\la$.
$W_\la$ and $V_\la$ are transitive models of $\ZFC$.
By Lemma \ref{2.7}, we can find an inaccessible $\theta>\la$
such that $\overline{W}^{V_\theta}=\overline{W}_\theta$,
where $\overline{W}^{V_\theta}$ is the $\ka$-mantle of $V_\theta$.

Take an elementary embedding $j:V_{\theta+1} \to V_{j(\theta)+1}$ such that the
critical point of $j$ is $\ka$ and $\theta<j(\ka)$.
$j(\theta)$ is inaccessible, so $V_{j(\theta)}$ and $W_{j(\theta)}$ are 
transitive models of $\ZFC$.
By the elementarity of $j$,
the set $j(\overline{W}^{V_\theta})$ is the $j(\ka)$-mantle of $j(V_\theta)=V_{j(\theta)}$.
By Lemma \ref{2.7}, $W_{j(\theta)}$ is a $\la$-ground of $V_{j(\theta)}$,
hence 
$j(\overline{W}^{V_\theta}) \subseteq W_{j(\theta)}$.

Fix a sequence $\vec{S}=\seq{S_\alpha \mid \alpha<\la} \in W$
of pairwise disjoint sets such that each $S_\alpha$ is a stationary subset of $\la \cap \mathrm{Cof}(\om)^W$ in $W$.
Since $V$ is a $\la$-c.c. forcing extension of $W$,
each $S_\alpha$ is stationary in $\la \cap \mathrm{Cof}(\om)^V$ in $V$ as well.
Let $\seq{S^*_\alpha \mid \alpha<j(\la)}=j(\vec{S}) \in  V_{j(\theta)}$.
By a well-known argument by Solovay,
we have that
$j``\la=\{\alpha<\sup(j``\la) \mid 
S^*_\alpha \cap \sup(j``\la)$ is stationary in $\sup(j``\la)\}$
(e.g., see Theorem 14 in Woodin-Davis-Rodorigues \cite{WDR}).
\begin{claim}
$j``\la \in W_{j(\theta)}$.
\end{claim}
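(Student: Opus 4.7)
The plan is to read off $j``\la$ from the displayed Solovay formula inside $W_{j(\theta)}$ rather than inside $V_{j(\theta)}$. This reduces the claim to two sub-points: (a) $\vec{S^*} \in W_{j(\theta)}$, and (b) for each $\alpha<\sup(j``\la)$, the set $S^*_\alpha \cap \sup(j``\la)$ is stationary in $\sup(j``\la)$ in $V_{j(\theta)}$ if and only if it is so in $W_{j(\theta)}$. Granting (a), (b), and the triviality $\sup(j``\la) \in W_{j(\theta)}$, the right-hand side of the Solovay formula becomes a set definable over $W_{j(\theta)}$ from parameters there, whence $j``\la \in W_{j(\theta)}$ by separation in $W_{j(\theta)}$.

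For (a), the idea is to route through the $\ka$-mantle instead of $W$ itself. Since $\vec{S} \in W \subseteq \overline{W}$ and $\vec{S}$ has rank well below $\theta$, we have $\vec{S} \in \overline{W} \cap V_\theta = \overline{W}_\theta$, and by the choice of $\theta$ (from Lemma \ref{2.7}(3)) this equals $\overline{W}^{V_\theta}$. Since $\vec{S}$ and $\overline{W}^{V_\theta}$ both lie in $V_{\theta+1}$, elementarity of $j$ gives $\vec{S^*} = j(\vec{S}) \in j(\overline{W}^{V_\theta})$. By elementarity, $j(\overline{W}^{V_\theta})$ is the $j(\ka)$-mantle of $V_{j(\theta)}$, and the inclusion $j(\overline{W}^{V_\theta}) \subseteq W_{j(\theta)}$ already noted in the excerpt --- valid because $\la < j(\ka)$ makes the $\la$-ground $W_{j(\theta)}$ also a $j(\ka)$-ground --- delivers $\vec{S^*} \in W_{j(\theta)}$.

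For (b), Lemma \ref{2.7}(1) tells us that $W_{j(\theta)}$ is a $\la$-ground of $V_{j(\theta)}$ via a forcing of size $<\la$, hence $\la$-c.c. The ordinal $\delta=\sup(j``\la)$ has cofinality $\la$ in $V_{j(\theta)}$ via the cofinal sequence $j``\la$ of order type $\la$, and since $\la$-c.c.\ forcing preserves cofinalities $\geq \la$ we also have $\cf^{W_{j(\theta)}}(\delta)=\la$. The standard preservation lemma for $\la$-c.c.\ forcing at ordinals of cofinality $\geq \la$ then shows that any ground-model subset of $\delta$ is stationary in $\delta$ in $W_{j(\theta)}$ if and only if in $V_{j(\theta)}$; applied to $T = S^*_\alpha \cap \delta$ this yields (b).

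The genuinely delicate point is (a). A naive application of elementarity to ``$\vec{S} \in W_\theta$'' only places $\vec{S^*}$ in $j(W_\theta)$, which is a $j(\la)$-ground of $V_{j(\theta)}$ but not obviously contained in $W_{j(\theta)}$. Routing through the $\ka$-mantle, and leveraging the inclusion $j(\overline{W}^{V_\theta}) \subseteq W_{j(\theta)}$ --- exactly where the extendibility hypothesis $\theta < j(\ka)$ is used decisively --- is what makes the argument go through.
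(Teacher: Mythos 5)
Your proposal is correct and follows essentially the same route as the paper: you pass $\vec{S}$ through $W_\theta \subseteq \overline{W}_\theta = \overline{W}^{V_\theta}$, apply $j$ and the inclusion $j(\overline{W}^{V_\theta}) \subseteq W_{j(\theta)}$ to get $j(\vec{S}) \in W_{j(\theta)}$, and then use $\la$-c.c.\ absoluteness of stationarity at $\sup(j``\la)$ to recover $j``\la$ inside $W_{j(\theta)}$ from the Solovay characterization, exactly as in the paper's proof (your explicit remarks on $\cf(\sup(j``\la))=\la$ and on why the naive use of elementarity fails just spell out details the paper leaves implicit).
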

\begin{proof}[Proof of the claim]
Since $\vec{S} \in W_{\theta} \subseteq \overline{W}_{\theta}
=\overline{W}^{V_\theta}$,
we have $j(\vec{S}) \in j(\overline{W}^{V_\theta}) \subseteq W_{j(\theta)}$.
$V_{j(\theta)}$ is a $\la$-c.c. forcing extension of $W_{j(\theta)}$.
Hence for each set $S \subseteq \sup(j``\la)$ with $S \in W_{j(\theta)}$,
the stationarity of $S$ is absolute between $V_{j(\theta)}$ and $W_{j(\theta)}$.
This means that
for every $\alpha<\sup(j``\la)$, we have that $\alpha \in j``\la$
if and only if $S^*_\alpha \cap \sup(j``\la)$ is stationary in $\sup(j``\la)$ in $W_{j(\theta)}$.
Thus we have
$j``\la \in W_{j(\theta)}$.
\end{proof}

Finally we claim that $W_\la =\overline{W}_\la$, which yields the contradiction.
The inclusion $W_\la \subseteq \overline{W}_\la$ is trivial.
For the converse,
we shall prove $\overline{W}_\alpha \subseteq W_\alpha$ by induction on $\alpha<\la$.
Since the critical point of $j$ is $\ka$,
we have $j(\overline{W}^{V_\theta})_\ka=(\overline{W}^{V_\theta})_\ka=
\overline{W}_\ka$.
Since $j(\overline{W}^{V_\theta}) \subseteq W_{j(\theta)}$, we have
$\overline{W}_\ka \subseteq W_\ka$.
Take $\alpha$ with $\ka \le \alpha<\la$,
and suppose $\overline{W}_\alpha \subseteq W_\alpha$ (so $\overline{W}_\alpha=W_\alpha$).
To show that $\overline{W}_{\alpha+1} \subseteq W$, 
take $X \in \overline{W}_{\alpha+1}$.
Since $\overline{W}$ is transitive, we have $X \subseteq \overline{W}_\alpha$.
$X \in \overline{W}_{\alpha+1} = (\overline{W}^{V_\theta})_{\alpha+1}
\subseteq \overline{W}^{V_\theta}$,
hence $j(X) \in j(\overline{W}^{V_\theta})$.
We know $\overline{W}_\alpha=W_\alpha \in W_\la$ and $W_\la$ is a model of $\ZFC$,
hence there is $\gamma \in W_\la$ and a bijection $f:\gamma \to \overline{W}_\alpha$
with $f \in W_\la$. $W_\la \subseteq \overline{W}_\la$,  so $f \in \overline{W}_\la=(\overline{W}^{V_\theta})_\la
\subseteq \overline{W}^{V_\theta}$,
and $j(f) \in j(\overline{W}^{V_\theta})$.
$j``\la \in W_{j(\theta)}$ and $j(f) \in j(\overline{W}^{V_\theta}) \subseteq W_{j(\theta)}$,
hence $j(f)``(j``\gamma)=j``\overline{W}_\alpha \in W_{j(\theta)}$.
Now $j(X) \in j(\overline{W}^{V_\theta}) \subseteq W_{j(\theta)}$,
thus $j``X=j(X) \cap j``\overline{W}_\alpha \in W_{j(\theta)}$.
Let $\pi \in W_{j(\theta)}$ be the collapsing map of $j``\overline{W}_\alpha$.
Then $\pi``(j``\overline{W}_\alpha)=\overline{W}_\alpha$, and
$X=\pi``(j``X) \in W_{j(\theta)}$, so we have $X \in W_{\alpha+1}$.
\end{proof}
We conclude this paper by asking the following natural question:
\begin{question}
Let $\ka$ be an extendible cardinal.
Is there a poset $\bbP \in \bbM$ of size $<\ka$
and a $(\bbM, \bbP)$-generic $G$ with
$V=\bbM[G]$?
\end{question}
This question is equivalent to the destructibility of extendible cardinals by non-small forcings:
\begin{question}
Let $\ka$ be a cardinal,
and $\bbP$ a poset such that 
for every $p \in \bbP$, the suborder $\{q \in \bbP \mid q \le p\}$
is not forcing equivalent to a poset of size $<\ka$.
Does $\bbP$ necessarily force that ``$\ka$ is not extendible''?
\end{question}

The referee pointed out  that this question might be related to the following result.
See Sargsyan-Schindler \cite{SS} for the definitions.
\begin{fact}[\cite{SS}]
Let $M_{\mathrm{SW}}$ be the least iterable inner model with a strong cardinal above a Woodin cardinal.
If $\ka$ is a strong cardinal of $M_{\mathrm{SW}}$,
then the $\ka$-mantle of $M_{\mathrm{SW}}$ is the smallest ground of
$M_{\mathrm{SW}}$ via some $\ka^+$-c.c. poset of size $\ka^{++}$,
while $M_{\mathrm{SW}}$ cannot be a forcing extension of its $\ka$-mantle via a poset of size $<\ka$.
\end{fact}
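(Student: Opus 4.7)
My plan is to follow the template of Theorem \ref{mainthm} above, replacing the extendibility embedding (unavailable at a mere strong cardinal of $M_{\mathrm{SW}}$) with a fine-structural comparison argument, and to exhibit the forcing witnessing groundness via a canonical Jensen-style coding of size $\ka^{++}$ that is $\ka^+$-c.c. The strategy has three pieces: (a) produce an explicit $\ka$-ground $N \subseteq M_{\mathrm{SW}}$; (b) show $N$ equals the $\ka$-mantle $\overline{W}$ and is in fact the smallest ground; (c) rule out witnessing forcings of size $<\ka$ by minimality of $M_{\mathrm{SW}}$.

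For (a) and (b), inside the cutoff $M_{\mathrm{SW}}|\ka^{++}$ one identifies a canonical $\ka^+$-c.c.\ coding poset $\bbP$ of size $\ka^{++}$ -- an almost-disjoint coding of the fine-structural predicates and extender sequence of $M_{\mathrm{SW}}$ between the Woodin cardinal and $\ka$ -- together with a generic $G$ such that $M_{\mathrm{SW}} = N[G]$, where $N$ is the base model stripped of the coding. Fact \ref{cov. and approx. of forcing ext.} gives $N$ the $\ka$-covering and $\ka$-approximation properties, so Fact \ref{uniqueness} pins $N$ down by $N \cap \p(\ka)$. To show $N$ coincides with the $\ka$-mantle, take any $\ka$-ground $W$: running a background-certified $K^c$-construction inside $W$ using the iteration strategy of $M_{\mathrm{SW}}$ (which restricts to $W$ by iterability absoluteness) reproduces the $M_{\mathrm{SW}}$-extender sequence up to $\ka$, forcing $N \cap \p(\ka) \subseteq W$ and hence $N \subseteq W$ by Fact \ref{uniqueness}; combined with the trivial inclusion $\overline{W} \subseteq N$ (as $N$ is itself a $\ka$-ground) this gives $N = \overline{W}$. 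That $\overline{W}$ is moreover the smallest ground -- not merely the smallest $\ka$-ground -- follows as in Theorem \ref{mainthm}: by downward-directedness (Fact \ref{DDG}) any ground of $M_{\mathrm{SW}}$ is dominated by a common ground with $N$, and the $K^c$-absoluteness above forces that common ground to $\ka$-approximate and thus contain $\overline{W}$.

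For (c), suppose for contradiction $M_{\mathrm{SW}} = \overline{W}[H]$ with $\size{\bbQ}<\ka$ for the witnessing poset $\bbQ \in \overline{W}$. Then both the Woodin cardinal below $\ka$ and the strong cardinal $\ka$ itself reflect downwards into $\overline{W}$ by standard small-forcing reflection: extenders on the $M_{\mathrm{SW}}$-sequence witnessing strength restrict to $\overline{W}$ once one factors through $\bbQ$, and Woodinness of the smaller cardinal descends since the relevant embeddings can be arranged to factor below the forcing. But $\overline{W} \subsetneq M_{\mathrm{SW}}$ by (a)--(b), so $\overline{W}$ is a strictly smaller iterable inner model containing a strong cardinal above a Woodin cardinal, contradicting the minimality built into the definition of $M_{\mathrm{SW}}$.

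The principal obstacle is the fine-structural step in (b): verifying that an arbitrary $\ka$-ground $W$ of $M_{\mathrm{SW}}$ actually runs a background-certified $K^c$-construction computing the $M_{\mathrm{SW}}$-sequence up to $\ka$. This requires choosing certificates inside $W$ which correctly certify all relevant extenders, and leveraging the $\ka$-covering and $\ka$-approximation properties of $W$ in $M_{\mathrm{SW}}$ to transfer iterability of the resulting premouse. The downward absoluteness of the Woodin cardinal in (c) is a parallel delicate point that depends on the specific arrangement of $\ka$ above the Woodin cardinal in the set-up, rather than on a purely abstract Levy--Solovay argument.
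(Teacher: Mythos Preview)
The paper does not prove this statement. It is quoted verbatim as a Fact from Sargsyan--Schindler \cite{SS}, introduced only because the referee suggested it as context for the open questions at the end; no proof, sketch, or indication of method is given in the paper. There is therefore nothing to compare your proposal against.

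As an independent remark on your sketch: the overall architecture of (a)--(b) is in the right spirit for the Varsovian-model analysis, though the details (which coding, why the $K^c$-construction inside an arbitrary $\ka$-ground recovers the $M_{\mathrm{SW}}$-sequence) are where all the work lies and your outline does not really engage with them. Part (c), however, is flawed as written. ``Least iterable inner model with a strong above a Woodin'' refers to minimality in the canonical mouse order, not to set-theoretic inclusion; exhibiting a proper inner model $\overline{W} \subsetneq M_{\mathrm{SW}}$ carrying the same large cardinals does not contradict that minimality, since $\overline{W}$ is not a priori a mouse at all, and even if it were, minimality of $M_{\mathrm{SW}}$ does not assert that no proper inner model shares its large-cardinal configuration. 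Note also that your argument in (c), if it worked, would barely use the hypothesis $\size{\bbQ}<\ka$: you invoke it only to reflect the large cardinals down, but you would then need a separate reason why the same reflection fails for the $\ka^{++}$-sized poset of part (a), or else (c) would contradict (a). The actual lower bound in \cite{SS} comes from a more specific analysis of what the $\ka$-mantle looks like in that model.
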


\printindex

\begin{thebibliography}{100}
\bibitem{FHR}
G.~Fuchs, J.~D.~Hamkins, J.~Reitz, {\it Set-theoretic geology}.
Ann. Pure Appl. Logic 166 (2015), no. 4, 464--501.

\bibitem{Hamkins}
J.~D.~Hamkins,
{\it Extensions with the approximation and cover properties have no new large cardinals.}
Fund. Math. 180 (2003), no. 3, 257--277.


\bibitem{Jech}
T.~Jech, {\it Set theory. The third millennium edition, revised and expanded.}
Springer-Verlag, 2003.

\bibitem{Laver}
R.~Laver,
{\it Certain very large cardinals are not created in small forcing extensions.} Ann. Pure Appl. Logic 149 (2007), no. 1-3, 1--6.


\bibitem{Reitz}
J.~Reitz, {\it  The Ground Axiom.}
J. of Symbolic Logic 72 (2007), no. 4, 1299--1317.

\bibitem{SS}
G.~Sargsyan, R.~Schindler, {\it Varsovian models I}.
To appear in J. of Symbolic Logic.

\bibitem{Usuba}
T.~Usuba, {\it The downward directed grounds hypothesis and very large cardinals}.
J. Math. Logic 17, 1750009 (2017)

\bibitem{WDR}
W.~H.~Woodin, J.~Davis, D.~Rodriguez, {\it The HOD dichotomy.}
Unpublished but available at \url{https://arxiv.org/abs/1605.00613}

\end{thebibliography}
\end{document}